\documentclass[11pt,a4paper]{article}

\usepackage[utf8]{inputenc}
\usepackage[T1]{fontenc}
\usepackage{lmodern}
\usepackage{amsmath,amssymb,amsfonts,amsthm}
\usepackage{geometry}
\usepackage{cite}
\usepackage[hidelinks]{hyperref}

\newtheorem{theorem}{Theorem}[section]
\newtheorem{corollary}[theorem]{Corollary}

\theoremstyle{remark}
\newtheorem*{remark}{Remark}

\theoremstyle{definition}
\newtheorem{example}[theorem]{Example}

\newcommand{\Rad}{\operatorname{Rad}}
\newcommand{\ltr}{\operatorname{ltr}}

\title{\textbf{Parallel Radical and Screen Distributions on Lightlike
Hypersurfaces of Indefinite Statistical Manifolds}}

\author{
Bur\c{c}in Do\u{g}an\\
\small Department of Engineering Basic Sciences,\\
\small Faculty of Engineering and Natural Sciences,\\
\small Malatya Turgut \"Ozal University, Malatya, T\"urkiye\\
\small E-mail: burcin.dogan@ozal.edu.tr\\
\small ORCID: 0000-0001-8386-213X
}

\date{}

\begin{document}

\maketitle

\begin{abstract}
We study the radical and screen distributions of a lightlike hypersurface of
an indefinite statistical manifold with respect to the induced connections
arising from the ambient dual affine connections. Necessary and sufficient
conditions are obtained for each distribution to be parallel with respect to
both induced connections. These conditions are expressed through the screen
shape operators, the local screen fundamental forms, the mean induced
connection, and the tangential difference tensor. When both distributions
are parallel, we characterize the screen projection and obtain a block
decomposition of the tangential difference tensor, whose mixed
radical--screen part vanishes. We also obtain equivalent conditions for the
induced connections to be metric connections. In the metric case, the induced
screen connections coincide with the Levi--Civita connection on each integral
manifold of the screen distribution, and the curvature tensors of the three
induced connections agree on screen directions. We further determine the
action of these curvature tensors on the radical direction. An explicit
example shows that both distributions may be parallel although neither
induced connection is a metric connection and their curvature tensors need
not vanish.
\end{abstract}

\noindent
\textbf{Keywords:}
lightlike hypersurface; indefinite statistical manifold; radical distribution;
screen distribution; dual connections; difference tensor.

\medskip

\noindent
\textbf{Mathematics Subject Classification (2020):}
Primary 53B30; Secondary 53C50, 53B05.

\section{Introduction}\label{sec:introduction}

Statistical geometry considers a metric together with a torsion-free affine
connection satisfying the Codazzi condition. Metric duality determines a
second torsion-free connection, and their mean is the Levi--Civita connection.
The foundations of this theory were established by Amari \cite{Amari1985} and
Lauritzen \cite{Lauritzen1987}. Vos \cite{Vos1989} developed the fundamental
equations for statistical submanifolds, and Furuhata \cite{Furuhata2009}
studied hypersurfaces of statistical manifolds.

If the ambient metric is indefinite, the induced metric on a hypersurface may
be degenerate. Then the tangent bundle meets its orthogonal complement along
the radical distribution, and the usual non-degenerate tangent--normal
splitting is no longer available. For the semi-Riemannian background see
O'Neill \cite{ONeill1983}. The systematic theory of lightlike submanifolds
uses a non-degenerate screen distribution and the corresponding lightlike
transversal bundle; see Duggal and Bejancu \cite{DuggalBejancu1996}, Duggal
and Jin \cite{DuggalJin2007}, and Duggal and \c{S}ahin
\cite{DuggalSahin2010}.

Lightlike submanifolds of indefinite statistical manifolds were considered by
Jain, Singh, and Kumar \cite{JainSinghKumar2020} and by Kazemi Balgeshir and
Salahvarzi \cite{KazemiSalahvarzi2020}. Rani and Kaur
\cite{RaniKaur2021} studied lightlike hypersurfaces in this setting.
Further results include lightlike geometry in indefinite Sasakian statistical
manifolds \cite{Bahadir2021}, curvature identities for lightlike
hypersurfaces \cite{BahadirEtAl2022}, and relations among the induced objects
associated with the dual connections \cite{BahadirTripathi2023}. In
particular, Bahad{\i}r and Tripathi showed that the induced screen
connections are dual with respect to the non-degenerate screen metric and
examined parallel and integrable screen distributions. More recent studies treat additional almost contact and Kenmotsu
statistical structures \cite{BhattiKaur2024,AhmadAlamAli2025}.

The radical and screen distributions are the two complementary components of
the tangent splitting of a lightlike hypersurface. In the statistical setting,
the dual ambient connections induce, in general, distinct torsion-free
connections on the hypersurface. Consequently, the condition that a
distribution be parallel for one induced connection does not a priori imply
the corresponding condition for the other. It is therefore natural to ask
when the radical and screen distributions are preserved by both induced
connections.

Earlier work already treats parallel radical and screen distributions through
the individual induced connections
\cite{JainSinghKumar2020,RaniKaur2021,BahadirTripathi2023}. We instead study
the geometry that arises when both induced connections preserve these two
distributions simultaneously. This simultaneous preservation is expressed in
terms of the mean induced connection and the tangential difference tensor. In
particular, simultaneous preservation of the radical and screen distributions
is shown to be equivalent to the screen projection being parallel with respect
to both induced connections, and equivalently to its being parallel with
respect to the mean induced connection together with a commutation relation for
the difference tensor. This yields a block decomposition of the difference
tensor and shows that its mixed radical--screen part vanishes. Consequences for
metric connections and curvature are obtained, including coincidence of the
three induced curvature tensors on screen directions in the metric case. An
explicit example shows that both distributions may be parallel although the
two induced connections are not metric connections.

Section~\ref{sec:preliminaries} recalls the notation and formulas used in the
paper. The main results are given in Section~\ref{sec:parallel}, followed by
a short conclusion.

\section{Preliminaries}\label{sec:preliminaries}

Let $(\overline{M},\overline{g})$ be a semi-Riemannian manifold and
$\overline{\nabla}$ a torsion-free affine connection on $\overline{M}$.
The pair $(\overline{\nabla},\overline{g})$ is called a statistical
structure if
\begin{equation}\label{eq:codazzi}
(\overline{\nabla}_{X}\overline{g})(Y,Z)
=
(\overline{\nabla}_{Y}\overline{g})(X,Z)
\end{equation}
for all vector fields $X,Y,Z$ on $\overline{M}$. The connection
$\overline{\nabla}^{*}$ dual to $\overline{\nabla}$ with respect to
$\overline{g}$ is determined by
\begin{equation}\label{eq:duality}
X\overline{g}(Y,Z)
=
\overline{g}(\overline{\nabla}_{X}Y,Z)
+
\overline{g}(Y,\overline{\nabla}^{*}_{X}Z).
\end{equation}
If $(\overline{\nabla},\overline{g})$ is statistical, then
$\overline{\nabla}^{*}$ is also torsion-free and
$(\overline{\nabla}^{*},\overline{g})$ is a statistical structure.
Moreover, the Levi--Civita connection $\overline{\nabla}^{0}$ of
$\overline{g}$ satisfies
\begin{equation}\label{eq:levicivita-mean}
\overline{\nabla}^{0}
=
\frac{1}{2}
\left(
\overline{\nabla}
+
\overline{\nabla}^{*}
\right).
\end{equation}
We refer to
$(\overline{M},\overline{g},\overline{\nabla},
\overline{\nabla}^{*})$
as an indefinite statistical manifold
\cite{Amari1985,Lauritzen1987,Vos1989,Furuhata2009}.

Let $M$ be a lightlike hypersurface of
$(\overline{M},\overline{g})$, and denote by $g$ the degenerate metric
induced on $M$. Since $M$ has codimension one, its radical distribution
is the rank-one bundle
\begin{equation}\label{eq:radical}
\Rad(TM)=TM\cap TM^{\perp}=TM^{\perp}.
\end{equation}
A screen distribution $S(TM)$ is a non-degenerate complementary
distribution to $\Rad(TM)$ in $TM$. Thus,
\begin{equation}\label{eq:tangent-decomposition}
TM=S(TM)\mathbin{\perp}\Rad(TM).
\end{equation}
Choose a nowhere-zero local section $\xi$ of $\Rad(TM)$. Corresponding
to the choice of $S(TM)$, there exists locally a unique lightlike
transversal vector field $N$ satisfying
\begin{equation}\label{eq:null-frame}
\overline{g}(N,N)=0,\qquad
\overline{g}(N,\xi)=1,\qquad
\overline{g}(N,W)=0
\end{equation}
for every $W\in\Gamma(S(TM))$. The line bundle generated by $N$ is
denoted by $\ltr(TM)$. Consequently,
\begin{equation}\label{eq:ambient-decomposition}
T\overline{M}|_{M}
=
S(TM)\mathbin{\perp}
\bigl(\Rad(TM)\oplus\ltr(TM)\bigr)
=
TM\oplus\ltr(TM).
\end{equation}
These constructions are standard in lightlike hypersurface geometry
\cite{DuggalBejancu1996,DuggalJin2007,DuggalSahin2010}.

Let $P$ denote the projection of $TM$ onto $S(TM)$ with respect to
\eqref{eq:tangent-decomposition}, and put
\begin{equation}\label{eq:eta}
\eta(X)=\overline{g}(X,N),\qquad X\in\Gamma(TM).
\end{equation}
Then every tangent vector field admits the decomposition
\begin{equation}\label{eq:projection}
X=PX+\eta(X)\xi.
\end{equation}

For $X,Y\in\Gamma(TM)$, the Gauss formulas associated with the dual
ambient connections are written as
\begin{align}
\overline{\nabla}_{X}Y
&=
\nabla_{X}Y+B(X,Y)N,
\label{eq:gauss}\\
\overline{\nabla}^{*}_{X}Y
&=
\nabla^{*}_{X}Y+B^{*}(X,Y)N,
\label{eq:gauss-star}
\end{align}
where $\nabla$ and $\nabla^{*}$ are the induced torsion-free
connections on $M$, and $B$ and $B^{*}$ are the corresponding second
fundamental forms. Since the ambient connections are torsion-free,
$B$ and $B^{*}$ are symmetric. Following the convention used for lightlike hypersurfaces of statistical
manifolds in \cite{BahadirTripathi2023}, the Weingarten formulas are
\begin{align}
\overline{\nabla}_{X}N
&=
-A^{*}_{N}X+\tau^{*}(X)N,
\label{eq:weingarten}\\
\overline{\nabla}^{*}_{X}N
&=
-A_{N}X+\tau(X)N,
\label{eq:weingarten-star}
\end{align}
where $A^{*}_{N}$ and $A_{N}$ are the corresponding Weingarten mappings
and $\tau^{*},\tau$ are one-forms on $M$.

The induced connections can be decomposed further along the screen
distribution. For $X\in\Gamma(TM)$ and $W\in\Gamma(S(TM))$, write
\begin{align}
\nabla_{X}W
&=
\nabla^{S}_{X}W+C(X,W)\xi,
\label{eq:screen-gauss}\\
\nabla^{*}_{X}W
&=
\nabla^{*S}_{X}W+C^{*}(X,W)\xi,
\label{eq:screen-gauss-star}
\end{align}
where $\nabla^{S}$ and $\nabla^{*S}$ are the induced connections on $S(TM)$,
while $C,C^{*}$ are the corresponding local screen fundamental forms.
For the radical vector field $\xi$, the tangential parts of the
derivatives are expressed as
\begin{align}
\nabla_{X}\xi
&=
-A_{\xi}X-\tau(X)\xi,
\label{eq:xi-derivative}\\
\nabla^{*}_{X}\xi
&=
-A^{*}_{\xi}X-\tau^{*}(X)\xi,
\label{eq:xi-derivative-star}
\end{align}
where $A_{\xi}$ and $A^{*}_{\xi}$ take values in $S(TM)$ and are the
corresponding screen shape operators.

The duality relation \eqref{eq:duality} yields, for
$X,Y,Z\in\Gamma(TM)$,
\begin{align}
Xg(Y,Z)
={}&
g(\nabla_{X}Y,Z)
+
g(Y,\nabla^{*}_{X}Z) \nonumber\\
&\quad
+B(X,Y)\eta(Z)
+
B^{*}(X,Z)\eta(Y).
\label{eq:induced-duality}
\end{align}
Thus, in contrast with the non-degenerate statistical submanifold case,
the induced connections $\nabla$ and $\nabla^{*}$ need not form a dual
pair with respect to the degenerate metric $g$. On the screen
distribution, however, \eqref{eq:induced-duality} reduces to
\begin{equation}\label{eq:screen-duality}
Xg(U,V)
=
g(\nabla^{S}_{X}U,V)
+
g(U,\nabla^{*S}_{X}V),
\qquad
U,V\in\Gamma(S(TM)).
\end{equation}
Hence $\nabla^{S}$ and $\nabla^{*S}$ are dual with respect to the
non-degenerate metric induced on $S(TM)$.

The second fundamental forms and shape operators satisfy the relations
\begin{align}
B(X,W)
&=
g(A^{*}_{\xi}X,W),
&
B^{*}(X,W)
&=
g(A_{\xi}X,W),
\label{eq:B-shape}\\
C(X,W)
&=
g(A_{N}X,W),
&
C^{*}(X,W)
&=
g(A^{*}_{N}X,W),
\label{eq:C-shape}
\end{align}
for $X\in\Gamma(TM)$ and $W\in\Gamma(S(TM))$. In addition,
\begin{equation}\label{eq:B-xi}
B(X,\xi)+B^{*}(X,\xi)=0.
\end{equation}
Related formulas for the induced geometric objects can be found in
\cite{JainSinghKumar2020,KazemiSalahvarzi2020,RaniKaur2021,
BahadirTripathi2023}.

For later use, define the mean induced connection and the
difference tensor by
\begin{equation}\label{eq:mean-difference}
\nabla^{0}=\frac{1}{2}\left(\nabla+\nabla^{*}\right),
\qquad
K(X,Y)=\frac{1}{2}\left(\nabla_{X}Y-\nabla^{*}_{X}Y\right).
\end{equation}
Then
\begin{equation}\label{eq:connection-decomposition}
\nabla_{X}Y=\nabla^{0}_{X}Y+K(X,Y),
\qquad
\nabla^{*}_{X}Y=\nabla^{0}_{X}Y-K(X,Y).
\end{equation}
Since $\nabla$ and $\nabla^{*}$ are torsion-free, $K$ is symmetric in its
two entries. The connection $\nabla^{0}$ is the induced connection obtained from the
Levi--Civita connection $\overline{\nabla}^{0}$ along $M$. If
$B^{0}=\frac12(B+B^{*})$, then
\begin{equation}\label{eq:nabla0-g}
(\nabla^{0}_{X}g)(Y,Z)
=
B^{0}(X,Y)\eta(Z)+B^{0}(X,Z)\eta(Y),
\qquad B^{0}(X,\xi)=0.
\end{equation}
Hence $\nabla^{0}$ is a metric connection if and only if $B^{0}=0$.
Since the induced metric is degenerate, $\nabla^{0}$ is not an intrinsic
Levi--Civita connection of $(M,g)$.

\section{Parallel Radical and Screen Distributions}
\label{sec:parallel}

Let $M$ be a lightlike hypersurface of an indefinite statistical manifold
$(\overline{M},\overline{g},\overline{\nabla},
\overline{\nabla}^{*})$. A distribution $\mathcal D$ on $M$ is said to
be parallel with respect to $\nabla$ if
$\nabla_XY\in\Gamma(\mathcal D)$ for all
$X\in\Gamma(TM)$ and $Y\in\Gamma(\mathcal D)$; the definition for
$\nabla^{*}$ is analogous. Related conditions for radical and screen distributions with respect to
individual induced connections appear in
\cite{JainSinghKumar2020,RaniKaur2021,BahadirTripathi2023}. Here we keep
the two induced connections distinct and focus on the conditions that hold for
both of them, comparing these common conditions through $\nabla^{0}$ and $K$.

\begin{theorem}\label{thm:radical-both}
Let $M$ be a lightlike hypersurface of an indefinite statistical manifold
$(\overline{M},\overline{g},\overline{\nabla},\overline{\nabla}^{*})$.
Denote by $\nabla$ and $\nabla^{*}$ the connections on $M$ induced by
$\overline{\nabla}$ and $\overline{\nabla}^{*}$, respectively. The following
assertions are equivalent:
\begin{enumerate}
\item $\Rad(TM)$ is parallel with respect to both $\nabla$ and
$\nabla^{*}$;
\item $A_{\xi}=A_{\xi}^{*}=0$;
\item $\nabla^{0}$ is a metric connection and
$K(X,\xi)\in\Gamma(\Rad(TM))$ for all $X\in\Gamma(TM)$;
\item $B(X,W)=B^{*}(X,W)=0$ for all $X\in\Gamma(TM)$ and
$W\in\Gamma(S(TM))$.
\end{enumerate}
\end{theorem}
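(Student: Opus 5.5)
The plan is to establish the cycle (1)$\Leftrightarrow$(2)$\Leftrightarrow$(4) directly from the structure equations, and then link (3) to (2) through $\nabla^{0}$ and $K$.

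\emph{Step (1)$\Leftrightarrow$(2).} By \eqref{eq:xi-derivative} and \eqref{eq:xi-derivative-star}, $\nabla_X\xi=-A_\xi X-\tau(X)\xi$, where $A_\xi X\in\Gamma(S(TM))$. Since $\Rad(TM)$ has rank one and is spanned by $\xi$, it is $\nabla$-parallel exactly when $\nabla_X\xi\in\Gamma(\Rad(TM))$ for all $X$. The splitting \eqref{eq:tangent-decomposition} shows this is equivalent to $A_\xi=0$; for a general section $f\xi$, $\nabla_X(f\xi)=(Xf)\xi+f\nabla_X\xi$ reduces the question to $\xi$. The same argument with $\nabla^{*}$ gives $A^{*}_\xi=0$.

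\emph{Step (2)$\Leftrightarrow$(4).} By \eqref{eq:B-shape}, $B(X,W)=g(A^{*}_\xi X,W)$ and $B^{*}(X,W)=g(A_\xi X,W)$ for $W\in\Gamma(S(TM))$. Because $A_\xi X$ and $A^{*}_\xi X$ lie in $S(TM)$ and $g$ is non-degenerate on $S(TM)$, the vanishing of these forms for all screen $W$ is equivalent to $A^{*}_\xi=0$ and $A_\xi=0$ respectively.

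\emph{Step (3)$\Leftrightarrow$(2).} Average and subtract \eqref{eq:xi-derivative} and \eqref{eq:xi-derivative-star}:
\[
\nabla^{0}_X\xi=-\tfrac12(A_\xi+A^{*}_\xi)X-\tfrac12(\tau+\tau^{*})(X)\xi,\qquad
K(X,\xi)=-\tfrac12(A_\xi-A^{*}_\xi)X-\tfrac12(\tau-\tau^{*})(X)\xi .
\]
So $K(X,\xi)\in\Gamma(\Rad(TM))$ for all $X$ iff $A_\xi=A^{*}_\xi$. By \eqref{eq:nabla0-g}, $\nabla^{0}$ is metric iff $B^{0}=0$. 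Using $B^{0}(X,\xi)=0$ (which is \eqref{eq:B-xi}), this holds iff $B^{0}(X,W)=0$ for screen $W$. By \eqref{eq:B-shape}, $B^{0}(X,W)=\tfrac12 g((A_\xi+A^{*}_\xi)X,W)$, so $B^{0}=0$ iff $A_\xi+A^{*}_\xi=0$, again by non-degeneracy of $g$ on the screen. Hence (3) amounts to $A_\xi=A^{*}_\xi$ together with $A_\xi=-A^{*}_\xi$, i.e.\ $A_\xi=A^{*}_\xi=0$, which is (2). The converse follows by reading the same identities backwards.

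\emph{Expected obstacle.} The only delicate point is the metric-connection step. There one must check that $B^{0}$ vanishing on pairs involving $\xi$ is automatic. This uses \eqref{eq:B-xi} together with symmetry of $B$ and $B^{*}$, so $B^{0}(\xi,\cdot)=0$ as well. One then needs the direction $\tau$-terms to play no role. Everything else is bookkeeping with the decomposition $TM=S(TM)\perp\Rad(TM)$.
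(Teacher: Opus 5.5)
Your proposal is correct and follows the paper's proof essentially step by step. You get (1)$\Leftrightarrow$(2) from the formulas for $\nabla_X\xi$ and $\nabla^{*}_X\xi$, and (2)$\Leftrightarrow$(4) from \eqref{eq:B-shape} with non-degeneracy of the screen metric; for (2)$\Leftrightarrow$(3), you combine $B^{0}=0\iff A_\xi+A^{*}_\xi=0$ (using $B^{0}(X,\xi)=0$) with the expression for $K(X,\xi)$, which forces $A_\xi=A^{*}_\xi$, exactly as the paper does.
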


\begin{proof}
Equations~\eqref{eq:xi-derivative} and
\eqref{eq:xi-derivative-star} give (1)$\Leftrightarrow$(2). For $W\in\Gamma(S(TM))$, \eqref{eq:B-shape} shows that
$2B^{0}(X,W)$ equals $g((A_{\xi}+A_{\xi}^{*})X,W)$. Together with
$B^{0}(X,\xi)=0$ in \eqref{eq:nabla0-g} and the fact that the metric on
$S(TM)$ is non-degenerate, this shows that $B^{0}=0$ if and only if
$A_{\xi}+A_{\xi}^{*}=0$. Thus, by \eqref{eq:nabla0-g},
$\nabla^{0}$ is a metric connection if and only if
$A_{\xi}+A_{\xi}^{*}=0$. Moreover,
\begin{equation}\label{eq:K-xi}
K(X,\xi)
=-\frac12(A_{\xi}-A_{\xi}^{*})X
-\frac12\{\tau(X)-\tau^{*}(X)\}\xi.
\end{equation}
Hence (2)$\Leftrightarrow$(3). Finally, \eqref{eq:B-shape} gives
$A_{\xi}=0$ if and only if $B^{*}(X,W)=0$, and
$A_{\xi}^{*}=0$ if and only if $B(X,W)=0$. Thus
(2)$\Leftrightarrow$(4).
\end{proof}

\begin{corollary}\label{cor:radical-totally-geodesic}
Let $M$ be a lightlike hypersurface of
$(\overline{M},\overline{g},\overline{\nabla},\overline{\nabla}^{*})$ with
induced connections $\nabla$ and $\nabla^{*}$. If $\Rad(TM)$ is parallel
with respect to both induced connections, then
\begin{equation}\label{eq:B-radical-form}
B(X,Y)=\eta(X)\eta(Y)B(\xi,\xi),
\qquad
B^{*}(X,Y)=-\eta(X)\eta(Y)B(\xi,\xi).
\end{equation}
Consequently, $M$ is totally geodesic with respect to
$\overline{\nabla}$ if and only if it is totally geodesic with respect
to $\overline{\nabla}^{*}$; either condition is equivalent to
$B(\xi,\xi)=0$.
\end{corollary}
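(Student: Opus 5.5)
The plan is to combine condition (4) of Theorem~\ref{thm:radical-both} with the decomposition \eqref{eq:projection} and the symmetry of $B$ and $B^{*}$. Under the hypothesis, Theorem~\ref{thm:radical-both} gives $B(X,W)=B^{*}(X,W)=0$ for all $X\in\Gamma(TM)$ and $W\in\Gamma(S(TM))$.

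For arbitrary $X,Y\in\Gamma(TM)$, I will write $Y=PY+\eta(Y)\xi$. Since $B(X,PY)=0$, bilinearity gives
\[
B(X,Y)=\eta(Y)B(X,\xi).
\]
Next I decompose the first slot. Writing $X=PX+\eta(X)\xi$ gives $B(X,\xi)=B(PX,\xi)+\eta(X)B(\xi,\xi)$. By symmetry of $B$, the first term equals $B(\xi,PX)$, which vanishes by condition (4) with $W=PX$. Hence
\[
B(X,Y)=\eta(X)\eta(Y)B(\xi,\xi).
\]
The same argument applied to $B^{*}$ yields $B^{*}(X,Y)=\eta(X)\eta(Y)B^{*}(\xi,\xi)$. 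Relation \eqref{eq:B-xi} with $X=\xi$ gives $B^{*}(\xi,\xi)=-B(\xi,\xi)$, which establishes \eqref{eq:B-radical-form}.

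For the consequence, recall that $M$ is totally geodesic with respect to $\overline{\nabla}$ exactly when $B=0$, and with respect to $\overline{\nabla}^{*}$ exactly when $B^{*}=0$. Since $\eta(\xi)=1$, formula \eqref{eq:B-radical-form} shows $B=0$ iff $B(\xi,\xi)=0$, and likewise $B^{*}=0$ iff $B(\xi,\xi)=0$. So the two totally geodesic conditions are equivalent, and each is equivalent to $B(\xi,\xi)=0$.

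No step is a real obstacle. The only point needing care is handling the mixed term $B(PX,\xi)$ through symmetry, since condition (4) places the screen vector in the second slot.
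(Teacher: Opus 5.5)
Your proof is correct and follows essentially the same route as the paper: condition (4) of Theorem~\ref{thm:radical-both} removes every component of $B$ and $B^{*}$ with a screen entry, using symmetry for the first slot (which the paper also uses, implicitly), and \eqref{eq:B-xi} at $X=\xi$ gives $B^{*}(\xi,\xi)=-B(\xi,\xi)$. Your use of $\eta(\xi)=1$ for the totally geodesic equivalence just spells out what the paper leaves to the definitions.
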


\begin{proof}
Write $X=PX+\eta(X)\xi$ and $Y=PY+\eta(Y)\xi$. By
Theorem~\ref{thm:radical-both}, $B$ and $B^{*}$ vanish whenever one
entry lies in $S(TM)$. Their symmetry then leaves only the
radical--radical terms, and \eqref{eq:B-xi} gives
\eqref{eq:B-radical-form}. The last assertion
follows from the definitions of a totally geodesic lightlike hypersurface
with respect to the two ambient connections.
\end{proof}

\begin{remark}
The identities in \eqref{eq:B-shape} are used with the second entry in
$S(TM)$. Thus the vanishing of $A_{\xi}$ and $A_{\xi}^{*}$ eliminates all
components of $B$ and $B^{*}$ with one entry in $S(TM)$, but does not
by itself eliminate the remaining radical--radical component. This is why the additional condition
$B(\xi,\xi)=0$ appears in Corollary~\ref{cor:radical-totally-geodesic}.
\end{remark}

The separate conditions involving $C$ and $C^{*}$ are closely related to
those considered in \cite{RaniKaur2021,BahadirTripathi2023}. The next result keeps the
two induced connections distinct and records their common condition.

\begin{theorem}\label{thm:screen-both}
Consider a lightlike hypersurface $M$ of the indefinite statistical manifold
$(\overline{M},\overline{g},\overline{\nabla},\overline{\nabla}^{*})$, and
write $\nabla$ and $\nabla^{*}$ for the induced connections. The following
assertions are equivalent:
\begin{enumerate}
\item $S(TM)$ is parallel with respect to both $\nabla$ and
$\nabla^{*}$;
\item $C=C^{*}=0$ on $TM\times S(TM)$;
\item $A_NX,A_N^{*}X\in\Gamma(\Rad(TM))$ for all
$X\in\Gamma(TM)$;
\item $S(TM)$ is parallel with respect to $\nabla^{0}$ and
$K(X,W)\in\Gamma(S(TM))$ for all $X\in\Gamma(TM)$ and
$W\in\Gamma(S(TM))$.
\end{enumerate}
\end{theorem}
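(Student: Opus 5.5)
The proof mirrors Theorem~\ref{thm:radical-both}: the screen Gauss formulas give the first link, the relations \eqref{eq:C-shape} translate $C,C^{*}$ into the Weingarten mappings, and the mean--difference splitting \eqref{eq:connection-decomposition} gives the last equivalence.

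For (1)$\Leftrightarrow$(2), fix $X\in\Gamma(TM)$ and $W\in\Gamma(S(TM))$. By \eqref{eq:screen-gauss} and \eqref{eq:screen-gauss-star}, the $\Rad(TM)$-components of $\nabla_XW$ and $\nabla^{*}_XW$ are $C(X,W)\xi$ and $C^{*}(X,W)\xi$. These components are taken with respect to the decomposition \eqref{eq:tangent-decomposition}. Hence $\nabla_XW\in\Gamma(S(TM))$ for all such $X,W$ exactly when $C=0$ on $TM\times S(TM)$, and the same holds for $\nabla^{*}$ and $C^{*}$.

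For (2)$\Leftrightarrow$(3), use \eqref{eq:C-shape}: $C(X,W)=g(A_NX,W)$ and $C^{*}(X,W)=g(A^{*}_NX,W)$. Write $A_NX=PA_NX+\eta(A_NX)\xi$ by \eqref{eq:projection}. The $\xi$-term is $g$-orthogonal to $S(TM)$, so $C(\cdot,W)=0$ for all $W$ means $g(PA_NX,W)=0$ for every $W\in\Gamma(S(TM))$. Non-degeneracy of $g$ on $S(TM)$ then forces $PA_NX=0$, that is, $A_NX\in\Gamma(\Rad(TM))$. The converse is immediate, and the same argument applies verbatim to $A^{*}_N$ and $C^{*}$.

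For (1)$\Leftrightarrow$(4), use \eqref{eq:connection-decomposition}, which gives
\[
\nabla^{0}_XW=\tfrac12(\nabla_XW+\nabla^{*}_XW),\qquad K(X,W)=\tfrac12(\nabla_XW-\nabla^{*}_XW).
\]
If (1) holds, both right-hand sides lie in $S(TM)$, so (4) holds. Conversely, if $\nabla^{0}_XW$ and $K(X,W)$ lie in $S(TM)$, then so do $\nabla_XW=\nabla^{0}_XW+K(X,W)$ and $\nabla^{*}_XW=\nabla^{0}_XW-K(X,W)$. More explicitly, the radical components are $\frac12(C+C^{*})(X,W)\xi$ and $\frac12(C-C^{*})(X,W)\xi$, and both vanish if and only if $C=C^{*}=0$.

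No step is a genuine obstacle. The only points needing care are to apply \eqref{eq:C-shape} with the second argument in $S(TM)$, so that only the screen part of $A_NX$ is detected, and to invoke non-degeneracy of the screen metric in (2)$\Leftrightarrow$(3).
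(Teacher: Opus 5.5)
Your proposal is correct and follows essentially the same route as the paper: the screen Gauss formulas give (1)$\Leftrightarrow$(2), \eqref{eq:C-shape} with non-degeneracy of the screen metric gives (2)$\Leftrightarrow$(3), and the mean--difference splitting with radical components $\tfrac12(C\pm C^{*})(X,W)\xi$ gives the last equivalence. The only cosmetic difference is that you link (4) directly to (1) rather than to (2), which is immaterial.
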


\begin{proof}
Equations~\eqref{eq:screen-gauss}--\eqref{eq:screen-gauss-star} give
(1)$\Leftrightarrow$(2). By \eqref{eq:C-shape}, and since the metric induced on $S(TM)$ is
non-degenerate, (2)$\Leftrightarrow$(3). Finally,
\begin{align}
\nabla_X^{0}W
&=\frac12(\nabla_X^{S}W+\nabla_X^{*S}W)
 +\frac12\{C(X,W)+C^{*}(X,W)\}\xi,\label{eq:nabla0-screen}\\
K(X,W)
&=\frac12(\nabla_X^{S}W-\nabla_X^{*S}W)
 +\frac12\{C(X,W)-C^{*}(X,W)\}\xi.\label{eq:K-screen}
\end{align}
Thus (4) holds if and only if $C+C^{*}=0$ and $C-C^{*}=0$, which is
precisely (2).
\end{proof}

We now consider the two distributions simultaneously. The screen
projection gives a concise formulation of the resulting condition.

\begin{theorem}\label{thm:both-distributions}
Let $M$, with induced connections $\nabla$ and $\nabla^{*}$, be a lightlike
hypersurface of the indefinite statistical manifold
$(\overline{M},\overline{g},\overline{\nabla},\overline{\nabla}^{*})$.
The following assertions are equivalent:
\begin{enumerate}
\item $\Rad(TM)$ and $S(TM)$ are parallel with respect to both
$\nabla$ and $\nabla^{*}$;
\item the screen projection $P$ is parallel with respect to both
$\nabla$ and $\nabla^{*}$;
\item $A_{\xi}=A_{\xi}^{*}=0$ and $C=C^{*}=0$ on
$TM\times S(TM)$;
\item $P$ is parallel with respect to $\nabla^{0}$ and
\begin{equation}\label{eq:K-commutes-P}
K(X,PY)=P K(X,Y)
\end{equation}
for all $X,Y\in\Gamma(TM)$.
\end{enumerate}
\end{theorem}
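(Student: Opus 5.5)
The equivalence (1)$\Leftrightarrow$(3) is immediate: it is the conjunction of Theorem~\ref{thm:radical-both} ((1)$\Leftrightarrow$(2)) and Theorem~\ref{thm:screen-both} ((1)$\Leftrightarrow$(2)). The remaining work is to relate (1) to statements about $P$. For this I will use the general fact that, for any affine connection $D$ on $M$ and the splitting $TM=S(TM)\oplus\Rad(TM)$ with projection $P$, we have
\[
(D_XP)Y=D_X(PY)-P D_XY .
\]
Write $Y=PY+\eta(Y)\xi$. Then $(D_XP)Y=(I-P)D_X(PY)-\eta(Y)\,P D_X\xi$, since $P$ kills $X(\eta(Y))\xi$.

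\textbf{(1)$\Leftrightarrow$(2).} If both distributions are $D$-parallel, the two terms vanish, so $DP=0$. Conversely, if $DP=0$, take $Y=W\in\Gamma(S(TM))$. This gives $(I-P)D_XW=0$, so $S(TM)$ is parallel. Taking $Y=\xi$ gives $PD_X\xi=0$, so $\Rad(TM)$ is parallel. Hence both distributions are $D$-parallel if and only if $DP=0$. Applying this with $D=\nabla$ and $D=\nabla^*$ gives (1)$\Leftrightarrow$(2).

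\textbf{(2)$\Leftrightarrow$(4).} By \eqref{eq:connection-decomposition},
\[
(\nabla_XP)Y=(\nabla^0_XP)Y+K(X,PY)-PK(X,Y),
\]
\[
(\nabla^*_XP)Y=(\nabla^0_XP)Y-K(X,PY)+PK(X,Y).
\]
So $\nabla P=\nabla^*P=0$ holds if and only if the half-sum and half-difference vanish. The half-sum is $\nabla^0P=0$, and the half-difference is \eqref{eq:K-commutes-P}. This is purely algebraic.

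I expect no real obstacle. The only point needing care is the projection identity above: $P$ applied to the $\xi$-component of $D_XY$ must be handled correctly, and the radical test must use $Y=\xi$ with $\eta(\xi)=1$. One could alternatively check (4) directly against (3) using \eqref{eq:K-xi}, \eqref{eq:nabla0-screen} and \eqref{eq:K-screen}. The route through (2) avoids that computation.
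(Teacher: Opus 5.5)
Your proof is correct and follows the paper's argument: (1)$\Leftrightarrow$(3) comes from the two preceding theorems, and (2)$\Leftrightarrow$(4) comes from writing $\nabla P$ and $\nabla^{*}P$ as $\nabla^{0}P\pm\bigl(K(X,PY)-PK(X,Y)\bigr)$, exactly as the paper does. The only difference is that you link (2) directly to (1) through the general identity $(D_XP)Y=(I-P)D_X(PY)-\eta(Y)PD_X\xi$, whereas the paper substitutes \eqref{eq:screen-gauss} and \eqref{eq:xi-derivative} into this same expression to get $(\nabla_XP)Y=C(X,PY)\xi+\eta(Y)A_{\xi}X$ and links (2) to (3); this is the same computation.
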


\begin{proof}
Using \eqref{eq:projection}, \eqref{eq:screen-gauss}, and
\eqref{eq:xi-derivative}, we obtain
\begin{equation}\label{eq:nabla-P}
(\nabla_XP)Y=C(X,PY)\xi+\eta(Y)A_{\xi}X.
\end{equation}
Similarly,
\begin{equation}\label{eq:nabla-star-P}
(\nabla_X^{*}P)Y=C^{*}(X,PY)\xi+\eta(Y)A_{\xi}^{*}X.
\end{equation}
By Theorems~\ref{thm:radical-both} and \ref{thm:screen-both},
(1) is equivalent to (3). On the other hand, taking first
$Y\in\Gamma(S(TM))$ and then $Y=\xi$ in
\eqref{eq:nabla-P}--\eqref{eq:nabla-star-P} shows that (2) is
equivalent to (3). From \eqref{eq:connection-decomposition},
\begin{align*}
(\nabla_XP)Y
&=(\nabla_X^{0}P)Y+K(X,PY)-P K(X,Y),\\
(\nabla_X^{*}P)Y
&=(\nabla_X^{0}P)Y-K(X,PY)+P K(X,Y).
\end{align*}
They vanish simultaneously if and only if $\nabla^{0}P=0$ and
\eqref{eq:K-commutes-P} holds. Hence (2)$\Leftrightarrow$(4).
\end{proof}

\begin{theorem}\label{thm:K-splitting}
Let $M$ be a lightlike hypersurface of
$(\overline{M},\overline{g},\overline{\nabla},\overline{\nabla}^{*})$.
Suppose that the connections $\nabla$ and $\nabla^{*}$ induced by
$\overline{\nabla}$ and $\overline{\nabla}^{*}$ preserve both
$\Rad(TM)$ and $S(TM)$. Then the mixed radical--screen part of the
difference tensor vanishes:
\begin{equation}\label{eq:K-mixed-zero}
K(W,\xi)=K(\xi,W)=0,
\qquad W\in\Gamma(S(TM)).
\end{equation}
Moreover, for all $X,Y\in\Gamma(TM)$,
\begin{equation}\label{eq:K-block-decomposition}
K(X,Y)=K(PX,PY)+\eta(X)\eta(Y)K(\xi,\xi),
\end{equation}
where $K(PX,PY)\in\Gamma(S(TM))$ and
$K(\xi,\xi)\in\Gamma(\Rad(TM))$. In addition,
\begin{equation}\label{eq:tau-difference-screen}
(\tau-\tau^{*})|_{S(TM)}=0,
\qquad
\tau-\tau^{*}=\{\tau(\xi)-\tau^{*}(\xi)\}\eta.
\end{equation}
\end{theorem}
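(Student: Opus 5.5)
By Theorem~\ref{thm:both-distributions}, the hypothesis is equivalent to $A_{\xi}=A_{\xi}^{*}=0$ and $C=C^{*}=0$ on $TM\times S(TM)$. The plan is to insert these vanishing conditions into the explicit formulas \eqref{eq:K-xi} and \eqref{eq:K-screen} for $K$. Together with the symmetry of $K$, this should give all three assertions. No new computation should be needed beyond combining these identities.

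\textbf{Step 1: $K$ on radical and screen arguments.} With $A_{\xi}=A_{\xi}^{*}=0$, \eqref{eq:K-xi} reduces to
\[
K(X,\xi)=-\tfrac12\{\tau(X)-\tau^{*}(X)\}\xi\in\Gamma(\Rad(TM)).
\]
With $C=C^{*}=0$, \eqref{eq:K-screen} reduces to
\[
K(X,W)=\tfrac12(\nabla^{S}_XW-\nabla^{*S}_XW)\in\Gamma(S(TM))
\]
for all $X\in\Gamma(TM)$ and $W\in\Gamma(S(TM))$.

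\textbf{Step 2: the mixed part vanishes.} Take $X=W\in\Gamma(S(TM))$ in the first formula of Step 1: then $K(W,\xi)$ lies in $\Rad(TM)$. By the symmetry of $K$, $K(W,\xi)=K(\xi,W)$, and the second formula with $X=\xi$ shows that this vector lies in $S(TM)$. Since $S(TM)\cap\Rad(TM)=0$, we get $K(W,\xi)=K(\xi,W)=0$, which is \eqref{eq:K-mixed-zero}. This symmetry argument is the only non-mechanical point in the proof; everything else is bookkeeping.

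\textbf{Step 3: the $\tau$ relations.} Since $K(W,\xi)=0$, the first formula of Step 1 gives $\tau(W)=\tau^{*}(W)$ for every $W\in\Gamma(S(TM))$. Decomposing $X=PX+\eta(X)\xi$ as in \eqref{eq:projection} then yields
\[
(\tau-\tau^{*})(X)=\eta(X)\{\tau(\xi)-\tau^{*}(\xi)\},
\]
which is \eqref{eq:tau-difference-screen}.

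\textbf{Step 4: the block decomposition.} Expand $K(X,Y)$ bilinearly using $X=PX+\eta(X)\xi$ and $Y=PY+\eta(Y)\xi$. The two mixed terms vanish by Step 2, which leaves \eqref{eq:K-block-decomposition}. Finally, $K(PX,PY)\in\Gamma(S(TM))$ by the second formula of Step 1, and $K(\xi,\xi)\in\Gamma(\Rad(TM))$ by the first formula with $X=\xi$.
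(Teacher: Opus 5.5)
Your proof is correct and is essentially the paper's argument. In both, symmetry of $K$ forces $K(W,\xi)=K(\xi,W)$ into $S(TM)\cap\Rad(TM)=\{0\}$, and the $\tau$ identities are then read off from \eqref{eq:K-xi}. The only cosmetic difference is the source of the facts that $K(X,\xi)$ is radical-valued and $K(X,W)$ is screen-valued: you take them from condition (3) of Theorem~\ref{thm:both-distributions} via \eqref{eq:K-xi} and \eqref{eq:K-screen}, while the paper takes them from the commutation relation \eqref{eq:K-commutes-P} in condition (4).
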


\begin{proof}
By Theorem~\ref{thm:both-distributions},
$K(X,PY)=PK(X,Y)$. For $W\in\Gamma(S(TM))$, taking $X=\xi$ and
$Y=W$ gives $K(\xi,W)=PK(\xi,W)$, so $K(\xi,W)$ is screen-valued.
Taking instead $X=W$ and $Y=\xi$ gives $PK(W,\xi)=0$. Since $K$ is
symmetric, $K(\xi,W)=K(W,\xi)$ is therefore both screen-valued and
radical-valued, and hence it vanishes. This proves
\eqref{eq:K-mixed-zero}.

Expanding $X=PX+\eta(X)\xi$ and $Y=PY+\eta(Y)\xi$ and using
\eqref{eq:K-mixed-zero} gives \eqref{eq:K-block-decomposition}. The first
term is screen-valued by \eqref{eq:K-commutes-P}, whereas setting
$Y=\xi$ in that relation gives $PK(X,\xi)=0$, and in particular
$K(\xi,\xi)\in\Gamma(\Rad(TM))$. Finally, Theorem~\ref{thm:radical-both}
and \eqref{eq:K-xi} imply
$K(W,\xi)=-\frac12\{\tau(W)-\tau^{*}(W)\}\xi$ for
$W\in\Gamma(S(TM))$. Together with \eqref{eq:K-mixed-zero}, this proves
the first identity in \eqref{eq:tau-difference-screen}; the second follows
from $X=PX+\eta(X)\xi$ and $\eta(\xi)=1$.
\end{proof}

\begin{corollary}\label{cor:connection-decomposition}
Let $M$ be a lightlike hypersurface of an indefinite statistical manifold
$(\overline{M},\overline{g},\overline{\nabla},\overline{\nabla}^{*})$.
Write $\nabla$ and $\nabla^{*}$ for its induced connections. Assume that
both $\Rad(TM)$ and $S(TM)$ are parallel with respect to these connections.
Then
\begin{align}
\nabla_XY
&=\nabla_X^{S}(PY)
 +\{X(\eta(Y))-\tau(X)\eta(Y)\}\xi,
\label{eq:nabla-under-parallel}\\
\nabla_X^{*}Y
&=\nabla_X^{*S}(PY)
 +\{X(\eta(Y))-\tau^{*}(X)\eta(Y)\}\xi.
\label{eq:nabla-star-under-parallel}
\end{align}
Moreover,
$(\nabla_X\eta)(Y)=\tau(X)\eta(Y)$ and
$(\nabla_X^{*}\eta)(Y)=\tau^{*}(X)\eta(Y)$.
\end{corollary}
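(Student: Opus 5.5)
The plan is to decompose $Y=PY+\eta(Y)\xi$ and differentiate each piece using the screen Gauss formulas and the $\xi$-derivative formulas. The hypotheses enter only through Theorem~\ref{thm:both-distributions}, item (3): $A_{\xi}=A_{\xi}^{*}=0$ and $C=C^{*}=0$ on $TM\times S(TM)$.

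For $\nabla$, the Leibniz rule gives
\begin{equation*}
\nabla_XY=\nabla_X(PY)+X(\eta(Y))\xi+\eta(Y)\nabla_X\xi.
\end{equation*}
Since $PY\in\Gamma(S(TM))$, \eqref{eq:screen-gauss} together with $C(X,PY)=0$ yields $\nabla_X(PY)=\nabla^{S}_X(PY)$. By \eqref{eq:xi-derivative} and $A_{\xi}=0$, we have $\nabla_X\xi=-\tau(X)\xi$. Substituting these gives \eqref{eq:nabla-under-parallel}. The identical computation with \eqref{eq:screen-gauss-star}, \eqref{eq:xi-derivative-star}, $C^{*}=0$ and $A^{*}_{\xi}=0$ gives \eqref{eq:nabla-star-under-parallel}.

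For the statement about $\eta$, write $(\nabla_X\eta)(Y)=X(\eta(Y))-\eta(\nabla_XY)$ and apply $\eta$ to \eqref{eq:nabla-under-parallel}. Since $\nabla^{S}_X(PY)$ lies in $S(TM)$, we have $\eta(\nabla^{S}_X(PY))=\overline g(\nabla^{S}_X(PY),N)=0$ by \eqref{eq:null-frame}. Also $\eta(\xi)=1$. Hence
\begin{equation*}
\eta(\nabla_XY)=X(\eta(Y))-\tau(X)\eta(Y),
\end{equation*}
and therefore $(\nabla_X\eta)(Y)=\tau(X)\eta(Y)$. The starred identity follows in the same way.

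No step is a genuine obstacle; the argument is bookkeeping. The only points to watch are the sign convention for $\tau$ in \eqref{eq:xi-derivative}, which produces the $+\tau(X)\eta(Y)$ in the final identity, and the fact that $\eta$ vanishes on $S(TM)$.
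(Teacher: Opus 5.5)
Your proposal is correct and follows essentially the same route as the paper: you decompose $Y=PY+\eta(Y)\xi$, use the vanishing of $A_{\xi},A_{\xi}^{*},C,C^{*}$ from Theorem~\ref{thm:both-distributions}, and then apply $\eta$ together with $\eta|_{S(TM)}=0$ and $\eta(\xi)=1$. You simply write out the bookkeeping that the paper's two-line proof leaves implicit.
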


\begin{proof}
The first two formulas follow from \eqref{eq:projection} and the
vanishing conditions in Theorem~\ref{thm:both-distributions}. Applying
$\eta$ to these formulas and using the definition of the covariant derivative
of a $1$-form gives the last two identities.
\end{proof}

\begin{theorem}\label{thm:metric-connections}
Let $M$ be a lightlike hypersurface of an indefinite statistical manifold
$(\overline{M},\overline{g},\overline{\nabla},\overline{\nabla}^{*})$
whose induced connections are denoted by $\nabla$ and $\nabla^{*}$.
Assume that $\Rad(TM)$ and $S(TM)$ are parallel with respect to both
induced connections. Then the following assertions are equivalent:
\begin{enumerate}
\item $\nabla$ is a metric connection;
\item $\nabla^{*}$ is a metric connection;
\item $K(X,W)=0$ for all $X\in\Gamma(TM)$ and
$W\in\Gamma(S(TM))$;
\item $\nabla_X^{S}W=\nabla_X^{*S}W$ for all
$X\in\Gamma(TM)$ and $W\in\Gamma(S(TM))$.
\end{enumerate}
\end{theorem}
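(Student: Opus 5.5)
The plan is to express $\nabla g$ and $\nabla^{*}g$ explicitly in terms of the difference tensor $K$ restricted to the screen distribution. The non-degeneracy of $g|_{S(TM)}$ then turns the metric condition into the vanishing of $K$ on $TM\times S(TM)$. Throughout, the standing hypothesis lets me use Theorem~\ref{thm:both-distributions} (so $A_{\xi}=A_{\xi}^{*}=0$ and $C=C^{*}=0$ on $TM\times S(TM)$) and Corollary~\ref{cor:connection-decomposition}.

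\textbf{Step 1: reduce to screen arguments.} For $X,Y,Z\in\Gamma(TM)$, I write $Y=PY+\eta(Y)\xi$ and $Z=PZ+\eta(Z)\xi$. Since $g(\xi,\cdot)=0$, we have $g(Y,Z)=g(PY,PZ)$. By \eqref{eq:nabla-under-parallel}, $\nabla_XY$ equals $\nabla^{S}_X(PY)$ plus a multiple of $\xi$, so $g(\nabla_XY,Z)=g(\nabla^{S}_XPY,PZ)$. Hence
\[
(\nabla_Xg)(Y,Z)=Xg(PY,PZ)-g(\nabla^{S}_XPY,PZ)-g(PY,\nabla^{S}_XPZ),
\]
and the analogous identity holds for $\nabla^{*}$ with $\nabla^{*S}$. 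In particular, $\nabla$ (resp.\ $\nabla^{*}$) is metric if and only if these expressions vanish for $U=PY$ and $V=PZ$ ranging over $\Gamma(S(TM))$.

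\textbf{Step 2: use screen duality.} Substituting \eqref{eq:screen-duality} for $Xg(U,V)$ gives
\[
(\nabla_Xg)(U,V)=g\bigl(U,\nabla^{*S}_XV-\nabla^{S}_XV\bigr),\qquad
(\nabla^{*}_Xg)(U,V)=g\bigl(\nabla^{S}_XU-\nabla^{*S}_XU,V\bigr).
\]
Since $C=C^{*}=0$, \eqref{eq:K-screen} gives $K(X,W)=\tfrac12(\nabla^{S}_XW-\nabla^{*S}_XW)$ for all $W\in\Gamma(S(TM))$. Therefore
\[
(\nabla_Xg)(U,V)=-2g(U,K(X,V)),\qquad
(\nabla^{*}_Xg)(U,V)=2g(K(X,U),V).
\]

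\textbf{Step 3: conclude.} By Theorem~\ref{thm:K-splitting}, $K(X,W)$ is screen-valued. The metric on $S(TM)$ is non-degenerate. So (1) holds if and only if $K(X,V)=0$ for all $X\in\Gamma(TM)$ and $V\in\Gamma(S(TM))$, that is, (1)$\Leftrightarrow$(3). The same argument gives (2)$\Leftrightarrow$(3). Finally, the formula for $K(X,W)$ from Step 2 gives (3)$\Leftrightarrow$(4) directly.

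The only delicate point is Step 1. One must check that no terms involving $\xi$ or $B,B^{*}$ survive. This is where Corollary~\ref{cor:connection-decomposition} is essential, because it makes all radical components of $\nabla_XY$ invisible to $g$.
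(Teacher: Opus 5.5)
Your proof is correct and follows essentially the paper's argument. Both reduce to screen arguments using the parallelism of $\Rad(TM)$, write $\nabla g$ and $\nabla^{*}g$ on $S(TM)$ in terms of $K$ via the screen duality, and conclude by the non-degeneracy of the screen metric, with (3)$\Leftrightarrow$(4) read off from \eqref{eq:K-screen}. The only difference is that you substitute \eqref{eq:screen-duality} directly to get $(\nabla_Xg)(U,V)=-2g(U,K(X,V))$, so you never need the paper's use of the metricity of $\nabla^{0}$ or the symmetry $g(K(X,U),V)=g(U,K(X,V))$.
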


\begin{proof}
Theorem~\ref{thm:radical-both} implies that $\nabla^{0}$ is a metric
connection, and Theorem~\ref{thm:both-distributions} implies that
$K(X,W)$ is screen-valued. For $U,V\in\Gamma(S(TM))$, using
\eqref{eq:connection-decomposition}, the fact that $\nabla^{0}$ is a metric
connection, and \eqref{eq:screen-duality}, we obtain
$g(K(X,U),V)=g(U,K(X,V))$. Therefore
\begin{equation}\label{eq:metric-K-screen}
(\nabla_Xg)(U,V)=-2g(K(X,U),V),
\qquad
(\nabla_X^{*}g)(U,V)=2g(K(X,U),V).
\end{equation}
The radical distribution is parallel with respect to both induced
connections, so the covariant derivatives of $g$ vanish whenever one of
the last two entries belongs to $\Rad(TM)$. Since the screen metric is
non-degenerate, \eqref{eq:metric-K-screen} proves (1)--(3) are
equivalent. Under the same hypotheses, \eqref{eq:K-screen} becomes
$2K(X,W)=\nabla_X^{S}W-\nabla_X^{*S}W$, proving
(3)$\Leftrightarrow$(4).
\end{proof}

\begin{corollary}\label{cor:metric-rigidity}
Consider a lightlike hypersurface $M$ of
$(\overline{M},\overline{g},\overline{\nabla},\overline{\nabla}^{*})$
with induced connections $\nabla$ and $\nabla^{*}$. Suppose that both
$\Rad(TM)$ and $S(TM)$ are parallel for these connections and that
$\nabla$ is a metric connection. Then
\begin{equation}\label{eq:K-metric-radical}
K(X,Y)=\eta(X)\eta(Y)K(\xi,\xi)
\end{equation}
for all $X,Y\in\Gamma(TM)$. Consequently,
\begin{equation}\label{eq:connection-rigidity-equivalences}
\nabla=\nabla^{*}
\quad\Longleftrightarrow\quad
K(\xi,\xi)=0
\quad\Longleftrightarrow\quad
\tau(\xi)=\tau^{*}(\xi)
\quad\Longleftrightarrow\quad
\tau=\tau^{*}.
\end{equation}
\end{corollary}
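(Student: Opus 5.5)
The plan is to combine the block decomposition of Theorem~\ref{thm:K-splitting} with the metric criterion of Theorem~\ref{thm:metric-connections}. Since $\nabla$ is a metric connection and both distributions are parallel, Theorem~\ref{thm:metric-connections} gives $K(X,W)=0$ for all $X\in\Gamma(TM)$ and $W\in\Gamma(S(TM))$. In particular $K(PX,PY)=0$, because $PY$ lies in $S(TM)$. Substituting this into \eqref{eq:K-block-decomposition} leaves only the radical--radical term. This yields \eqref{eq:K-metric-radical}, with $K(\xi,\xi)\in\Gamma(\Rad(TM))$.

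For the chain \eqref{eq:connection-rigidity-equivalences} I will argue as follows.

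\emph{First equivalence.} By \eqref{eq:connection-decomposition}, $\nabla=\nabla^{*}$ holds exactly when $K\equiv 0$. By \eqref{eq:K-metric-radical}, together with $\eta(\xi)=1$, this is equivalent to $K(\xi,\xi)=0$.

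\emph{Second equivalence.} Since Theorem~\ref{thm:radical-both} gives $A_\xi=A_\xi^{*}=0$ under our hypotheses, formula \eqref{eq:K-xi} with $X=\xi$ becomes
\[
K(\xi,\xi)=-\tfrac12\{\tau(\xi)-\tau^{*}(\xi)\}\xi .
\]
As $\xi$ is nowhere zero, $K(\xi,\xi)=0$ if and only if $\tau(\xi)=\tau^{*}(\xi)$.

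\emph{Third equivalence.} The second identity in \eqref{eq:tau-difference-screen} states $\tau-\tau^{*}=\{\tau(\xi)-\tau^{*}(\xi)\}\eta$. Hence $\tau=\tau^{*}$ if and only if $\tau(\xi)=\tau^{*}(\xi)$; for the forward direction, evaluate at $\xi$.

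I do not expect a real obstacle, since each step quotes an earlier result. The only point needing care is the use of $A_\xi=A^{*}_\xi=0$ in \eqref{eq:K-xi}, which removes the screen component of $K(\xi,\xi)$; this is supplied by Theorem~\ref{thm:radical-both}. I will also note that assuming $\nabla$ metric is equivalent to assuming $\nabla^{*}$ metric, again by Theorem~\ref{thm:metric-connections}, so the statement is symmetric in the two connections.
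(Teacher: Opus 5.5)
Your proposal is correct and follows essentially the same route as the paper's proof. Theorem~\ref{thm:metric-connections} removes the screen--screen block of \eqref{eq:K-block-decomposition}; \eqref{eq:K-xi} with $A_{\xi}=A_{\xi}^{*}=0$ gives $K(\xi,\xi)=-\frac12\{\tau(\xi)-\tau^{*}(\xi)\}\xi$; and \eqref{eq:connection-decomposition} together with \eqref{eq:tau-difference-screen} closes the chain, with you simply writing out each of the three equivalences that the paper leaves to the reader.
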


\begin{proof}
Theorem~\ref{thm:metric-connections} gives $K(X,W)=0$ for every
$W\in\Gamma(S(TM))$. Hence \eqref{eq:K-block-decomposition} reduces to
\eqref{eq:K-metric-radical}. Under the same hypotheses,
\eqref{eq:K-xi} gives
$K(\xi,\xi)=-\frac12\{\tau(\xi)-\tau^{*}(\xi)\}\xi$.
The equivalences in \eqref{eq:connection-rigidity-equivalences} now follow
from \eqref{eq:connection-decomposition}, \eqref{eq:K-metric-radical}, and
\eqref{eq:tau-difference-screen}.
\end{proof}

\begin{corollary}\label{cor:screen-leaves}
Let $M$ be a lightlike hypersurface of the indefinite statistical manifold
$(\overline{M},\overline{g},\overline{\nabla},\overline{\nabla}^{*})$
with induced connections $\nabla$ and $\nabla^{*}$. If $\Rad(TM)$ and
$S(TM)$ are parallel with respect to both induced connections and $\nabla$
is a metric connection, then
$S(TM)$ is integrable, and on each integral manifold of $S(TM)$ the
restrictions of $\nabla^{S}$ and $\nabla^{*S}$ coincide with the
Levi--Civita connection of the metric induced by $g$.
\end{corollary}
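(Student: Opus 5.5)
Under the hypotheses, Theorem~\ref{thm:metric-connections} gives $K(X,W)=0$ for all $X\in\Gamma(TM)$, $W\in\Gamma(S(TM))$, and hence $\nabla^{S}=\nabla^{*S}$ on $TM\times S(TM)$. The proof has two parts: integrability of $S(TM)$, and identification of this common screen connection with the Levi--Civita connection on each leaf.

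\emph{Integrability.} For $U,V\in\Gamma(S(TM))$, torsion-freeness of $\nabla$ gives
\[
[U,V]=\nabla_UV-\nabla_VU .
\]
Since $S(TM)$ is parallel with respect to $\nabla$, Theorem~\ref{thm:screen-both} gives $C=0$. Then \eqref{eq:screen-gauss} yields $\nabla_UV=\nabla^{S}_UV$ and $\nabla_VU=\nabla^{S}_VU$, both screen-valued. Hence
\[
[U,V]=\nabla^{S}_UV-\nabla^{S}_VU\in\Gamma(S(TM)),
\]
and the Frobenius theorem gives integrability. Note that $C=0$ makes $C$ trivially symmetric, which is the usual integrability criterion.

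\emph{Levi--Civita property.} Let $M'$ be an integral manifold of $S(TM)$ with the induced metric $g'$. This metric is non-degenerate because the screen metric is.
\begin{itemize}
\item[(a)] \emph{Well-defined on $M'$.} For $U,V$ tangent to $M'$, the vector $\nabla^{S}_UV$ is tangent to $M'$ and depends only on the restrictions of $U,V$ to $M'$, since $U$ is tangent to $M'$. So $\nabla^{S}$ restricts to a connection on $M'$.
\item[(b)] \emph{Torsion-free.} This is the computation above: $\nabla^{S}_UV-\nabla^{S}_VU=[U,V]$.
\item[(c)] \emph{Metric.} By \eqref{eq:screen-duality},
\[
Ug(V,W)=g(\nabla^{S}_UV,W)+g(V,\nabla^{*S}_UW),
\]
and since $\nabla^{*S}=\nabla^{S}$ this becomes $Ug(V,W)=g(\nabla^{S}_UV,W)+g(V,\nabla^{S}_UW)$, i.e.\ $\nabla^{S}g'=0$.
\end{itemize}
Uniqueness of the Levi--Civita connection of $(M',g')$ then gives the claim for $\nabla^{S}$, and for $\nabla^{*S}$ because the two coincide.

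The only delicate point is (a), namely that the restriction to a leaf makes sense. This is standard for a connection that preserves a distribution and is differentiated only along directions tangent to the leaf, so no real obstacle is expected.
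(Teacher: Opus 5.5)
Your proposal is correct and follows essentially the same route as the paper: integrability from the torsion-freeness of $\nabla$ together with the parallelism of $S(TM)$, then $\nabla^{S}=\nabla^{*S}$ from Theorem~\ref{thm:metric-connections} and metricity from \eqref{eq:screen-duality}, finishing with uniqueness of the Levi--Civita connection. Your additional remarks, namely that $\nabla^{S}$ restricts to a connection on each leaf and that the restriction is torsion-free, only make explicit what the paper leaves implicit.
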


\begin{proof}
Since $S(TM)$ is parallel with respect to $\nabla$ and $\nabla$ is
torsion-free, for $U,V\in\Gamma(S(TM))$ we have
$[U,V]\in\Gamma(S(TM))$. Thus $S(TM)$ is integrable.
Theorem~\ref{thm:metric-connections} gives
$\nabla^{S}=\nabla^{*S}$, and \eqref{eq:screen-duality} shows that this
common torsion-free connection is a metric connection on each integral
manifold. Hence it is the Levi--Civita connection of the induced
non-degenerate metric.
\end{proof}

Let $R$, $R^{*}$, and $R^{0}$ denote the curvature tensors of
$\nabla$, $\nabla^{*}$, and $\nabla^{0}$, respectively, with
$R(X,Y)Z=\nabla_X\nabla_YZ-\nabla_Y\nabla_XZ-\nabla_{[X,Y]}Z$ and
analogously for $R^{*}$ and $R^{0}$. Following the convention in
\cite{BahadirTripathi2023}, we write
\begin{equation}\label{eq:dtau-convention}
2d\tau(X,Y)=X(\tau(Y))-Y(\tau(X))-\tau([X,Y]),
\end{equation}
and analogously for $\tau^{*}$.

\begin{theorem}\label{thm:curvature-preservation}
Suppose that $M$ is a lightlike hypersurface of
$(\overline{M},\overline{g},\overline{\nabla},\overline{\nabla}^{*})$, and
let $\nabla$ and $\nabla^{*}$ denote the induced connections on $M$.
Assume that both $\Rad(TM)$ and $S(TM)$ are parallel with respect to these
connections. Then, for all $X,Y,Z\in\Gamma(TM)$,
\begin{align}
R(X,Y)PZ&=P(R(X,Y)Z),\label{eq:R-commutes-P}\\
R^{*}(X,Y)PZ&=P(R^{*}(X,Y)Z),\label{eq:Rstar-commutes-P}\\
R^{0}(X,Y)PZ&=P(R^{0}(X,Y)Z),\label{eq:R0-commutes-P}
\end{align}
and
\begin{align}
R(X,Y)\xi&=-2d\tau(X,Y)\xi,\label{eq:R-radical}\\
R^{*}(X,Y)\xi&=-2d\tau^{*}(X,Y)\xi,\label{eq:Rstar-radical}\\
R^{0}(X,Y)\xi&=-\{d\tau(X,Y)+d\tau^{*}(X,Y)\}\xi.
\label{eq:R0-radical}
\end{align}
Consequently, $R$, $R^{*}$, and $R^{0}$ preserve both $S(TM)$ and
$\Rad(TM)$.
\end{theorem}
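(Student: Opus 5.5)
The plan is to get the commutation identities from the fact that $P$ is parallel, and the radical identities from the simplified formula for $\nabla\xi$.

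\textbf{Commutation with $P$.} By Theorem~\ref{thm:both-distributions}, $\nabla P=\nabla^{*}P=0$. Since $\nabla^{0}=\frac12(\nabla+\nabla^{*})$, also $\nabla^{0}P=0$; this is exactly the first half of assertion (4) there. A standard fact is that if an endomorphism field $P$ is parallel for a connection $D$, then $P$ commutes with the curvature of $D$. To see this, note that $D_Y(PZ)=P(D_YZ)$. Applying this twice gives $D_XD_Y(PZ)=P(D_XD_YZ)$, and similarly for the bracket term, so $R^{D}(X,Y)PZ=P(R^{D}(X,Y)Z)$. Applying this with $D=\nabla,\nabla^{*},\nabla^{0}$ yields \eqref{eq:R-commutes-P}--\eqref{eq:R0-commutes-P}. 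In particular, each curvature tensor maps $S(TM)$ into $S(TM)$. It also maps $\Rad(TM)=\ker P$ into $\ker P$.

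\textbf{Radical direction.} By Theorem~\ref{thm:radical-both}, $A_\xi=A^{*}_\xi=0$, so \eqref{eq:xi-derivative} reduces to $\nabla_X\xi=-\tau(X)\xi$. A direct computation then gives
\[
\nabla_X\nabla_Y\xi=-X(\tau(Y))\xi+\tau(X)\tau(Y)\xi .
\]
The $\tau(X)\tau(Y)$ terms cancel upon antisymmetrisation. Hence
\[
R(X,Y)\xi=-\{X(\tau(Y))-Y(\tau(X))-\tau([X,Y])\}\xi=-2d\tau(X,Y)\xi
\]
by \eqref{eq:dtau-convention}. The same computation with $\nabla^{*}_X\xi=-\tau^{*}(X)\xi$ gives \eqref{eq:Rstar-radical}. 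For $\nabla^{0}$ we have $\nabla^{0}_X\xi=-\sigma(X)\xi$ with $\sigma=\frac12(\tau+\tau^{*})$. The identical computation gives $R^{0}(X,Y)\xi=-2d\sigma(X,Y)\xi$, and $2d\sigma=d\tau+d\tau^{*}$ by linearity, which is \eqref{eq:R0-radical}.

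The final assertion follows by combining the two parts. The radical identities show directly that $\Rad(TM)$ is preserved, and the commutation with $P$ gives preservation of $S(TM)$.

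The only delicate point is recognising that $\nabla^{0}$ also has a parallel $P$, so that it need not be handled separately through $K$. Once that is in place, this proof is routine computation.
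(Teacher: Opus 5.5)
Your proof is correct and follows essentially the same route as the paper. Parallelism of $P$ for $\nabla$, $\nabla^{*}$ and $\nabla^{0}$ gives the commutation identities, and the reduced formulas $\nabla_X\xi=-\tau(X)\xi$ and $\nabla^{*}_X\xi=-\tau^{*}(X)\xi$, together with their average, give the radical identities by direct computation. You also spell out two points the paper only asserts: why a parallel endomorphism commutes with curvature, and why $\nabla^{0}P=0$ follows by averaging.
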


\begin{proof}
By Theorem~\ref{thm:both-distributions}, $P$ is parallel with respect to
$\nabla$, $\nabla^{*}$, and $\nabla^{0}$. Their curvature operators
therefore commute with $P$, giving
\eqref{eq:R-commutes-P}--\eqref{eq:R0-commutes-P}. The same theorem and
\eqref{eq:xi-derivative}--\eqref{eq:xi-derivative-star} give
$\nabla_X\xi=-\tau(X)\xi$ and
$\nabla_X^{*}\xi=-\tau^{*}(X)\xi$. Averaging these identities gives
\[
\nabla_X^{0}\xi=-\frac12\{\tau(X)+\tau^{*}(X)\}\xi.
\]
The definitions of the three curvature tensors, together with
\eqref{eq:dtau-convention}, yield
\eqref{eq:R-radical}--\eqref{eq:R0-radical}. The last assertion follows
from $S(TM)=\operatorname{Im}P$ and $\Rad(TM)=\ker P$.
\end{proof}

\begin{corollary}\label{cor:radical-curvature}
Let $M$, with induced connections $\nabla$ and $\nabla^{*}$, be a lightlike
hypersurface of
$(\overline{M},\overline{g},\overline{\nabla},\overline{\nabla}^{*})$.
If $\Rad(TM)$ and $S(TM)$ are parallel with respect to both induced
connections, then $R(X,Y)$ vanishes on $\Rad(TM)$ for all
$X,Y$ if and only if $d\tau=0$. Likewise, $R^{*}(X,Y)$ vanishes on
$\Rad(TM)$ for all $X,Y$ if and only if $d\tau^{*}=0$, while
$R^{0}(X,Y)$ vanishes on $\Rad(TM)$ for all $X,Y$ if and only if
$d\tau+d\tau^{*}=0$.
\end{corollary}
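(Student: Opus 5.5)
The corollary follows from Theorem~\ref{thm:curvature-preservation} by reading off the radical formulas. Under the standing hypothesis that $\Rad(TM)$ and $S(TM)$ are parallel with respect to both $\nabla$ and $\nabla^{*}$, that theorem gives \eqref{eq:R-radical}--\eqref{eq:R0-radical}:
\[
R(X,Y)\xi=-2d\tau(X,Y)\xi,\qquad R^{*}(X,Y)\xi=-2d\tau^{*}(X,Y)\xi,\qquad R^{0}(X,Y)\xi=-\{d\tau(X,Y)+d\tau^{*}(X,Y)\}\xi .
\]
Since $\Rad(TM)$ is a line bundle spanned locally by $\xi$, and each curvature operator is $C^\infty(M)$-linear in its last entry, the statement ``$R(X,Y)$ vanishes on $\Rad(TM)$'' is equivalent to $R(X,Y)\xi=0$.

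The steps, in order, are as follows. First, I reduce vanishing on $\Rad(TM)$ to vanishing on the local generator $\xi$, using tensoriality. Second, I use that $\xi$ is nowhere zero, so $f\xi=0$ holds if and only if $f=0$ pointwise. Applied to \eqref{eq:R-radical}, this shows that $R(X,Y)\xi=0$ for all $X,Y$ if and only if $d\tau(X,Y)=0$ for all $X,Y$, that is, $d\tau=0$. The same argument applied to \eqref{eq:Rstar-radical} gives the statement for $R^{*}$ and $d\tau^{*}$, and applied to \eqref{eq:R0-radical} gives the statement for $R^{0}$ and $d\tau+d\tau^{*}$.

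The only point needing care is that $\tau$ and $\tau^{*}$ depend on the choice of the local section $\xi$. Replacing $\xi$ by $\tilde\xi=f\xi$ with $f$ nowhere zero changes $\tau$ by an exact form, namely $\tilde\tau=\tau-d\log|f|$ (and similarly for $\tau^{*}$). Hence $d\tau$ and $d\tau^{*}$ are well defined, so the conditions $d\tau=0$, $d\tau^{*}=0$ and $d\tau+d\tau^{*}=0$ are independent of the choice of $\xi$, and the local equivalences patch together globally. I would record this check briefly. I do not expect a genuine obstacle here: the corollary is a direct reading of Theorem~\ref{thm:curvature-preservation}.
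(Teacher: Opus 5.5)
Your proposal is correct and matches the paper's intended argument: the paper gives no separate proof, treating the corollary as an immediate reading of \eqref{eq:R-radical}--\eqref{eq:R0-radical} in Theorem~\ref{thm:curvature-preservation}, using tensoriality in the last slot and the fact that $\xi$ is nowhere zero. Your extra observation that $\xi\mapsto f\xi$ gives $\tilde\tau=\tau-d\log|f|$ (and likewise for $\tau^{*}$) is also correct; it shows that the conditions on $d\tau$ and $d\tau^{*}$ do not depend on the choice of $\xi$, a point the paper leaves implicit.
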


\begin{corollary}\label{cor:metric-curvature-coincidence}
Consider a lightlike hypersurface $M$ of the indefinite statistical manifold
$(\overline{M},\overline{g},\overline{\nabla},\overline{\nabla}^{*})$,
with $\nabla$ and $\nabla^{*}$ induced by the ambient dual connections.
Suppose that $\Rad(TM)$ and $S(TM)$ are parallel for both induced
connections and that $\nabla$ is a metric connection. Then
\begin{equation}\label{eq:screen-curvature-coincidence}
R(X,Y)W=R^{*}(X,Y)W=R^{0}(X,Y)W
\end{equation}
for all $X,Y\in\Gamma(TM)$ and $W\in\Gamma(S(TM))$. Moreover,
\begin{equation}\label{eq:full-curvature-coincidence}
R=R^{*}
\quad\Longleftrightarrow\quad
d\tau=d\tau^{*},
\end{equation}
and whenever these equivalent conditions hold,
$R=R^{*}=R^{0}$.
\end{corollary}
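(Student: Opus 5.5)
Under the hypotheses, Theorem~\ref{thm:metric-connections} gives $K(X,W)=0$ for all $X\in\Gamma(TM)$ and $W\in\Gamma(S(TM))$. By \eqref{eq:connection-decomposition}, $\nabla_XW=\nabla^{0}_XW=\nabla^{*}_XW$ for every screen field $W$. I would compare the curvatures directly. By the symmetry of $K$, the vanishing of $K(X,W)$ for screen $W$ also gives $K(W,X)=0$. So the only possibly nonzero component is $K(\xi,\xi)$, and the reduction \eqref{eq:K-metric-radical} of Corollary~\ref{cor:metric-rigidity} applies.

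First step: take $W\in\Gamma(S(TM))$. Since $S(TM)$ is parallel for $\nabla$, the field $\nabla_YW$ is again screen-valued. Hence
\[
\nabla_X\nabla_YW=\nabla_X(\nabla^{*}_YW)=\nabla^{*}_X\nabla^{*}_YW,
\]
where the last equality applies $K(X,\cdot)=0$ on screen fields a second time. The same holds with $X,Y$ interchanged and for $\nabla_{[X,Y]}W$. Therefore $R(X,Y)W=R^{*}(X,Y)W$, and the identical argument with $\nabla^{0}$ gives $R^{0}(X,Y)W$. This proves \eqref{eq:screen-curvature-coincidence}.

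Second step: decompose $Z=PZ+\eta(Z)\xi$. Curvature is tensorial, so
\[
R(X,Y)Z=R(X,Y)PZ+\eta(Z)R(X,Y)\xi,
\]
and similarly for $R^{*}$ and $R^{0}$. The screen parts agree by the first step. Theorem~\ref{thm:curvature-preservation} gives $R(X,Y)\xi=-2d\tau(X,Y)\xi$ and $R^{*}(X,Y)\xi=-2d\tau^{*}(X,Y)\xi$. Since $\xi$ is nowhere zero, it follows that $R=R^{*}$ if and only if $d\tau=d\tau^{*}$. When this holds, \eqref{eq:R0-radical} gives
\[
R^{0}(X,Y)\xi=-\{d\tau+d\tau^{*}\}(X,Y)\xi=-2d\tau(X,Y)\xi=R(X,Y)\xi.
\]
Hence $R=R^{*}=R^{0}$.

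The only delicate point is the first step. One must check that the inner derivative $\nabla_YW$ stays in $S(TM)$, so that the vanishing of $K$ on screen arguments can be applied a second time. This is exactly where the parallelism of $S(TM)$ under $\nabla$ (and, equivalently, under $\nabla^{*}$ and $\nabla^{0}$, by Theorem~\ref{thm:both-distributions}) is used. Everything else is direct substitution of earlier results.
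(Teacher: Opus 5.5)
Your proposal is correct and follows essentially the same route as the paper. Both arguments get agreement of $\nabla$, $\nabla^{*}$, $\nabla^{0}$ on screen fields from Theorem~\ref{thm:metric-connections} (you via $K(X,W)=0$, the paper via the equivalent $\nabla^{S}=\nabla^{*S}$), use parallelism of $S(TM)$ to get equal curvatures on $S(TM)$, and then settle the radical direction with \eqref{eq:R-radical}--\eqref{eq:R0-radical}; the detour through Corollary~\ref{cor:metric-rigidity} in your first paragraph is harmless but not needed.
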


\begin{proof}
Theorem~\ref{thm:metric-connections} gives
$\nabla^{S}=\nabla^{*S}$, and their common value is also the screen part of
$\nabla^{0}$. Since $S(TM)$ is parallel with respect to all three induced
connections, their curvature operators therefore agree on screen vector
fields, proving \eqref{eq:screen-curvature-coincidence}. By
\eqref{eq:R-radical} and \eqref{eq:Rstar-radical}, equality $R=R^{*}$ is
equivalent to $d\tau=d\tau^{*}$. In this case
\eqref{eq:R0-radical} gives the same action on $\Rad(TM)$, while
\eqref{eq:screen-curvature-coincidence} gives equality on $S(TM)$; hence
$R=R^{*}=R^{0}$.
\end{proof}

The following example shows that both distributions may be parallel for the
two induced connections even when neither connection is a metric connection.

\begin{example}\label{ex:nonmetric-parallel}
Let $\overline M=\mathbb R^{4}$ with coordinates
$(x_0,x_1,x_2,x_3)$ and semi-Euclidean metric
\[
\overline g=-dx_0^{2}+dx_1^{2}+dx_2^{2}+dx_3^{2}.
\]
Let $\overline\nabla^{0}$ be its Levi--Civita connection. Writing
$\partial_i=\partial/\partial x_i$, put
$E=\partial_{1}$, $F=\partial_{2}$, and $\theta=dx_1$, and define
\begin{equation}\label{eq:example-K}
\overline K(X,Y)=x_2\theta(X)\theta(Y)E,
\qquad
\overline\nabla=\overline\nabla^{0}+\overline K,
\qquad
\overline\nabla^{*}=\overline\nabla^{0}-\overline K.
\end{equation}
Since
$\overline g(\overline K(X,Y),Z)
=x_2\theta(X)\theta(Y)\theta(Z)$ is symmetric in $X,Y,Z$, the two
connections in \eqref{eq:example-K} are torsion-free and dual with
respect to $\overline g$. Hence
$(\overline M,\overline g,\overline\nabla,\overline\nabla^{*})$ is an
indefinite statistical manifold.

Consider $M=\{x_0=x_3\}$. It is a lightlike hypersurface with
\[
\xi=\partial_0+\partial_3,
\qquad
N=\frac12(-\partial_0+\partial_3),
\qquad
S(TM)=\operatorname{span}\{E,F\}.
\]
These fields satisfy \eqref{eq:null-frame}. For tangent $X,Y$,
$\overline K(X,Y)$ is screen-valued, whereas
$\overline K(X,\xi)=\overline K(X,N)=0$. Since the coordinate vector
fields are parallel with respect to the flat Levi--Civita connection
$\overline\nabla^{0}$, the Gauss--Weingarten formulas give
\[
A_{\xi}=A_{\xi}^{*}=0,
\qquad C=C^{*}=0,
\qquad \tau=\tau^{*}=0.
\]
Thus $\Rad(TM)$ and $S(TM)$ are parallel with respect to both induced
connections.

The induced difference tensor is not zero: $K(E,E)=x_2E$. Hence, on any
open subset where $x_2\neq0$,
\[
(\nabla_Eg)(E,E)=-2x_2,
\qquad
(\nabla_E^{*}g)(E,E)=2x_2,
\]
so neither induced connection is a metric connection. Also,
$\nabla_EE=x_2E$, $\nabla_FE=0$,
$\nabla_E^{*}E=-x_2E$, and $\nabla_F^{*}E=0$, which gives
\[
R(F,E)E=E,
\qquad
R^{*}(F,E)E=-E.
\]
Thus the curvature tensors of the induced connections need not vanish; they
preserve the screen distribution as asserted in Theorem~\ref{thm:curvature-preservation},
while
$R(X,Y)\xi=R^{*}(X,Y)\xi=R^{0}(X,Y)\xi=0$.
\end{example}

\section{Conclusion}\label{sec:conclusion}

We obtained necessary and sufficient conditions for the radical and screen
distributions of a lightlike hypersurface to be parallel with respect to both
induced connections. The conditions were expressed in terms of the screen
shape operators and the local screen fundamental forms, and were compared by
using the mean induced connection and the tangential difference tensor. In
particular, the simultaneous conditions for the two distributions are
equivalent to the screen projection being parallel with respect to both
induced connections, or, equivalently, to its being parallel with respect to
$\nabla^{0}$ together with the commutation relation
$K(X,PY)=PK(X,Y)$.

Under these conditions, the mixed radical--screen part of $K$ vanishes
and $K$ splits into a screen--screen term and a radical--radical term. The
connection $\nabla$ is a metric connection if and only if $\nabla^{*}$ is a
metric connection, and this occurs precisely when the screen--screen term
vanishes. In that case, the two induced connections can differ only in the
radical--radical direction; they coincide exactly when $\tau=\tau^{*}$. The
common screen connection is the Levi--Civita connection on each integral
manifold of $S(TM)$.

We also showed that the curvature tensors of $\nabla$, $\nabla^{*}$, and
$\nabla^{0}$ preserve the radical and screen distributions and determined
their action on the radical direction. In the metric case, the three curvature
tensors agree on screen directions, while $R=R^{*}$ is equivalent to
$d\tau=d\tau^{*}$; under this condition all three curvature tensors coincide.
The final example shows that the two distributions may be parallel for both
induced connections even though neither connection is a metric connection and
the induced curvature tensors are nonzero.



\begin{thebibliography}{99}

\bibitem{Amari1985}
S.-I. Amari,
\textit{Differential-geometrical methods in statistics},
Lecture Notes in Statistics, vol.~28, Springer-Verlag, New York, 1985.

\bibitem{Lauritzen1987}
S. L. Lauritzen,
Statistical manifolds,
in \textit{Differential Geometry in Statistical Inference},
IMS Lecture Notes--Monograph Series, vol.~10,
Institute of Mathematical Statistics, 1987, pp.~163--216.

\bibitem{Vos1989}
P. W. Vos,
Fundamental equations for statistical submanifolds with applications
to the Bartlett correction,
\textit{Ann. Inst. Statist. Math.} \textbf{41} (1989), no.~3, 429--450.

\bibitem{Furuhata2009}
H. Furuhata,
Hypersurfaces in statistical manifolds,
\textit{Differential Geom. Appl.} \textbf{27} (2009), no.~3, 420--429.

\bibitem{ONeill1983}
B. O'Neill,
\textit{Semi-Riemannian geometry with applications to relativity},
Pure and Applied Mathematics, vol.~103,
Academic Press, New York, 1983.

\bibitem{DuggalBejancu1996}
K. L. Duggal and A. Bejancu,
\textit{Lightlike submanifolds of semi-Riemannian manifolds and applications},
Mathematics and Its Applications, vol.~364,
Kluwer Academic Publishers, Dordrecht, 1996.

\bibitem{DuggalJin2007}
K. L. Duggal and D. H. Jin,
\textit{Null curves and hypersurfaces of semi-Riemannian manifolds},
World Scientific Publishing Co., Hackensack, NJ, 2007.

\bibitem{DuggalSahin2010}
K. L. Duggal and B. \c{S}ahin,
\textit{Differential geometry of lightlike submanifolds},
Frontiers in Mathematics, Birkh\"auser, Basel, 2010.

\bibitem{JainSinghKumar2020}
V. Jain, A. P. Singh and R. Kumar,
On the geometry of lightlike submanifolds of indefinite statistical
manifolds,
\textit{Int. J. Geom. Methods Mod. Phys.} \textbf{17} (2020), 2050099,
\url{https://doi.org/10.1142/S0219887820500991}.

\bibitem{KazemiSalahvarzi2020}
M. B. Kazemi Balgeshir and S. Salahvarzi,
Lightlike submanifolds of semi-Riemannian statistical manifolds,
\textit{Balkan J. Geom. Appl.} \textbf{25} (2020), no.~2, 52--65.

\bibitem{RaniKaur2021}
V. Rani and J. Kaur,
Lightlike hypersurfaces of an indefinite statistical manifold,
\textit{Adv. Appl. Math. Sci.} \textbf{20} (2021), no.~9, 1995--2013.

\bibitem{Bahadir2021}
O. Bahad{\i}r,
On lightlike geometry of indefinite Sasakian statistical manifolds,
\textit{AIMS Math.} \textbf{6} (2021), no.~11, 12845--12862,
\url{https://doi.org/10.3934/math.2021741}.

\bibitem{BahadirEtAl2022}
O. Bahad{\i}r, A. N. Siddiqui, M. G\"ulbahar and A. H. Alkhaldi,
Main curvatures identities on lightlike hypersurfaces of statistical
manifolds and their characterizations,
\textit{Mathematics} \textbf{10} (2022), no.~13, 2290,
\url{https://doi.org/10.3390/math10132290}.

\bibitem{BahadirTripathi2023}
O. Bahad{\i}r and M. M. Tripathi,
Geometry of lightlike hypersurfaces of a statistical manifold,
\textit{WSEAS Trans. Math.} \textbf{22} (2023), 466--474,
\url{https://doi.org/10.37394/23206.2023.22.52}.

\bibitem{BhattiKaur2024}
S. Bhatti and J. Kaur,
Lightlike hypersurfaces of an indefinite $(\alpha,\beta)$-type almost
contact metric statistical manifold with an $(l,m)$-type connection,
\textit{J. Indones. Math. Soc.} \textbf{30} (2024), no.~3, 398--420.

\bibitem{AhmadAlamAli2025}
M. Ahmad, M. Alam and S. Ali,
Screen semi-invariant lightlike submanifolds of an indefinite Kenmotsu
statistical manifold,
\textit{Commun. Korean Math. Soc.} \textbf{40} (2025), no.~4, 901--916,
\url{https://doi.org/10.4134/CKMS.c250017}.

\end{thebibliography}
\end{document}